\def\binomh#1#2{ \scalebox{.3}[1.2]{\textbf{)}}{\genfrac{}{}{0pt}{}{#1}{#2}}\scalebox{.3}[1.2]{\textbf{(}} }
\def\binomhh#1#2{ \scalebox{.4}[1.7]{\textbf{)}}{\genfrac{}{}{0pt}{}{#1}{#2}}\scalebox{.4}[1.7]{\textbf{(}} }
\def\hpt4q5{${\cal HPT}_{\!4,5}$}
\def\hpt4q6{${\cal HPT}_{\!4,6}$}
\def\hpt4q{${\cal HPT}_{\!4,q}$}
\def\g{\bar{\bf g}}
\def\d{\bar{\bf d}}
\def\0{\bar{\bf 0}}
\def\Ddots{\mathinner{\mkern1mu\raise\p@
\vbox{\kern7\p@\hbox{.}}\mkern2mu
\raise4\p@\hbox{.}\mkern2mu\raise7\p@\hbox{.}\mkern1mu}}
\newcommand {\N}{\mathbb{N}}
\newcommand {\R}{\mathbb{R}}
\newtheorem{lemma}{Lemma}[section]
\newtheorem{remark}{Remark}
\newtheorem{conj}{Conjecture}
\title{\bf Power sums in hyperbolic Pascal triangles
}
\author{L\'aszl\'o N\'emeth\footnote{University of Sopron,  Institute of Mathematics, Hungary. \textit{nemeth.laszlo@uni-sopron.hu}}, 
 L\'aszl\'o Szalay\footnote{University J.~Selye, Department of Mathematics and Informatics, Slovakia; and University of Sopron,  Institute of Mathematics, Hungary.  \textit{szalay.laszlo@uni-sopron.hu}}}
\date{}
\begin{document}

\maketitle

\begin{abstract}
 In this paper, we describe a method to determine the power sum of the elements of the rows in the hyperbolic Pascal triangles corresponding to $\{4,q\}$ with $q\ge5$. The method is based on the theory of linear recurrences, and the results are demonstrated by evaluating the $k^{th}$ power sum in the range $2\le k\le 11$.  \\[1mm]
{\em Key Words: Pascal triangle, Hyperbolic Pascal triangle, Binomial coefficients, Power sum}\\
{\em MSC code: 05A10, 11B65, 11B99.}   
 
 The final publication is available at Analele Stiintifice ale Universitatii Ovidius Constanta,  (www.anstuocmath.ro).  
\end{abstract}


\section{Introduction}\label{sec:introduction}

Binomial coefficients,  Pascal's triangle and its generalizations have been widely studied. One of the examined properties is the power sum 
\begin{equation*}
S_k(n)=\sum_{i=0}^n {n \choose i}^k, \quad n\geq 0,\quad k\in \{0,1,2,\ldots\}.
\end{equation*}
Apart from $S_0(n)=n+1$, $S_1(n)=2^n$, and $S_2(n)={2n \choose n}$, there is no general solution for $S_k(n)$, but recurrences are given for the cases $3\leq k\leq 10$. For example, Franel \cite{F1,F2} obtained the recurrence
$$
S_3(n+1)=\frac{7n^2+7n+2}{(n+1)^2}\,S_3(n)+\frac{8n^2}{(n+1)^2}\,S_3(n-1)
$$
for $k=3$.
The difficulty of the problem is also indicated by the result, that there is no closed form, only asymptotic formula for $S_k(n)$ in the cases $3\leq k\leq 9$.  For more details and further references of power sums of binomial coefficient see \cite{Dzh,Cusick,Yuanu}. 

In this article, we investigate the analogous question, and give a general method to determine the power sums in hyperbolic Pascal triangles, which are a recently discovered geometrical generalizations of Pascal's classical triangle. The method is illustrated by evaluating the $k^{th}$ power sum in the range $2\le k\le 11$, and we calculate the corresponding recurrences up to $k=11$. A hyperbolic Pascal triangle contains two types of elements (say $A$ and $B$) in the rows, therefore its structure is more complicated than the Pascal's triangle's, but at the same time the existence of elements having type $B$ facilitates the determination of power sums since they behave as separating items between elements having type $A$. A short summary on hyperbolic Pascal triangles can be found in the next subsection.

\subsection{Hyperbolic Pascal triangles}
 
In the hyperbolic plane there are infinite types of regular mosaics, they are denoted by Schl\"afli's symbol $\{p,q\}$, where the positive integers $p$ and $q$ has the property $(p-2)(q-2)>4$. Each regular mosaic induces a so-called hyperbolic Pascal triangle (see \cite{BNSz}), following and generalizing the connection between the classical Pascal's triangle and the Euclidean regular square mosaic $\{4,4\}$.
For more details see \cite{BNSz,NSz_alter,NSz_recu}, but here we also collect some necessary information. 

There are several approaches to generalize Pascal's arithmetic triangle (see, for instance \cite{BSz}).
The hyperbolic Pascal triangle based on the mosaic $\{p,q\}$ can be figured as a digraph, where the vertices and the edges are the vertices and the edges of a well defined part of the lattice $\{p,q\}$, respectively, further each vertex possesses the value which gives the number of different shortest paths from the base vertex. Figure~\ref{fig:Pascal_46_layer5} illustrates the hyperbolic Pascal  triangle ${\cal HPT}_{\!p,q}$, when $\{p,q\}=\{4,6\}$. 

\begin{figure}[h!]
	\centering
	\includegraphics[width=0.99\linewidth]{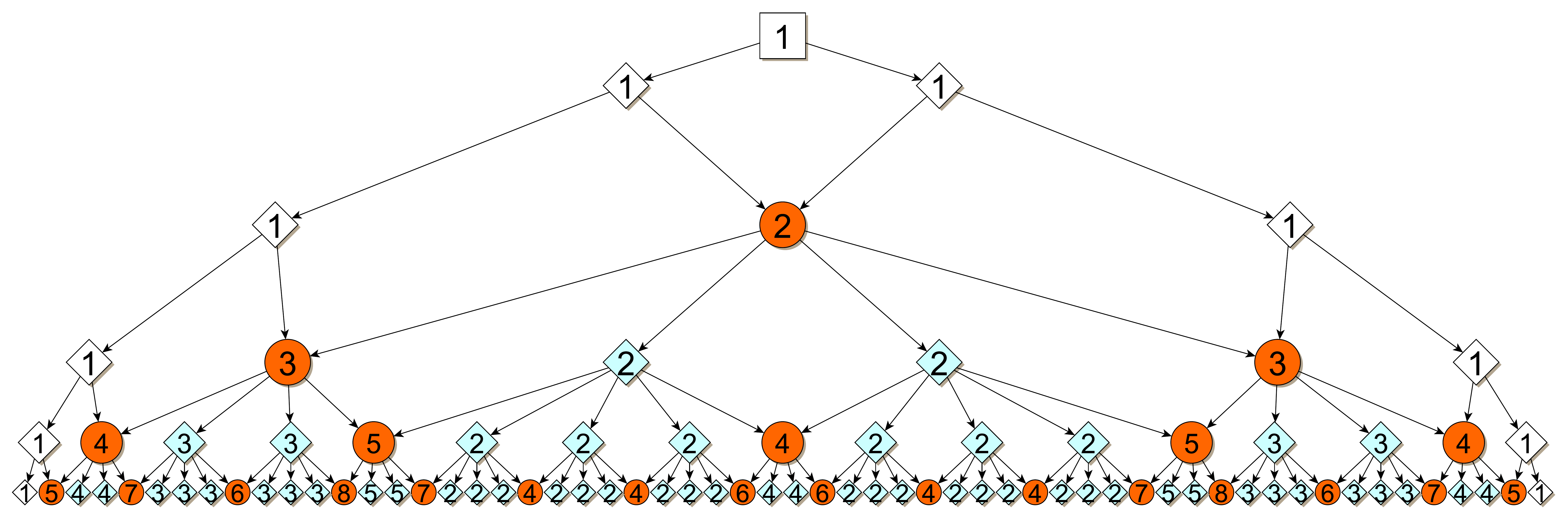}
	\caption{Hyperbolic Pascal triangle linked to $\{4,6\}$ up to row 5}
	\label{fig:Pascal_46_layer5}
\end{figure}

Generally, for $\{4,q\}$ the base vertex has two edges, the leftmost and the rightmost vertices have three, the others have $q$ edges. The square shaped cells surrounded by appropriate edges are corresponding to the squares in the regular mosaic.
Apart from the winger elements, certain vertices (called ``Type A'' for convenience) have two ascendants and $q-2$ descendants, the others (``Type B'') have one ascendant and $q-1$ descendants. In the figures, we denote the vertices type $A$ by red circles and the vertices type $B$ by cyan diamonds, further the wingers by white diamonds. The vertices which are $n$-edge-long far from the base vertex are in row $n$. 

The general method of drawing is the following. Going along the vertices of the $j^{th}$ row, according to type of the elements (winger, $A$, $B$), we draw appropriate number of edges downwards (2, $q-2$, $q-1$, respectively). Neighbour edges of two neighbour vertices of the $j^{th}$ row meet in the $(j+1)^{th}$ row, constructing a vertex with type $A$. The other descendants of row $j$ in row $j+1$ have type $B$.
In the sequel, $\binomh{n}{k}$ denotes the $k^\text{th}$ element in row $n$, which is either the sum of the values of its two ascendants or the value of its unique ascendant. We note, that the hyperbolic Pascal triangle has the property of vertical symmetry. 

In the remaining part of the paper we consider the winger nodes (having value 1) as elements with type $B$. Denote by $a_n$ and $b_n$ the number of vertices of type $A$ and $B$ in row $n$, respectively, further let 
\begin{equation*}\label{sn}
s_n=a_n+b_n, 
\end{equation*}
which gives the total number of the vertices of row $n\ge0$.
Then the ternary homogeneous recurrence relation
\begin{equation*}\label{recur1}
s_n=(q-1)s_{n-1}-(q-1)s_{n-2}+s_{n-3}\qquad (n\ge4),
\end{equation*}
holds with initial values 
$s_1=2$, $s_2=3$, $s_3=q$  (see \cite{BNSz}).

Moreover, let $\hat{a}_n$, $\hat{b}_n$ and $\hat{s}_n$ denote the sum of type $A$, type $B$ and all elements of the $n^{th}$ row, respectively. It was justified in \cite{BNSz} that the three sequences $\{\hat{a}_n\}$, $\{\hat{b}_n\}$ and $\{\hat{s}_n\}$ can be described by the same ternary homogenous recurrence relation 
\begin{equation*}\label{eq:hats}
x_n=qx_{n-1}-(q+1)x_{n-2}+2x_{n-3}\qquad (n\ge4),
\end{equation*}
their initial values are 
$$
\hat{a}_1=0,\;\hat{a}_2=2,\;\hat{a}_3=6;\quad \hat{b}_1=2,\;\hat{b}_2=2,\;\hat{b}_3=2q-6;\quad \hat{s}_1=2,\;\hat{s}_2=4,\;\hat{s}_3=2q.$$

\subsection{New results}

In this article, we describe a method to determine the $k^{th}$-power sum
\begin{equation*}\label{eq:SUMk}
(s^k)_{n}=\sum_{i=0}^{s_n-1}{\binomhh{n}{i}}^k.
\end{equation*}
We will illustrate the technique by the cases $2\le k\le 11$. The results are given by different linear recursions.  Note that $s_n=(s^0)_n$, further $\hat{s}_n=(s^1)_n$. Hence we interested in the problem if $k\ge2$.  

Here we also introduce the notation $(a^k)_{n}$ and $(b^k)_{n}$ for the sum of the $k^{th}$ power of elements of type $A$ and $B$ in row $n$, respectively. Clearly, $(s^k)_{n}=(a^k)_{n}+(b^k)_{n}$.

\section{The method}

Let us start with a further technical note. Take two consecutive elements ${\binomh{n}{\ell}}$ and  ${\binomh{n}{\ell+1}}$ of ${\cal HPT}_{\!4,q}$, and let denote $X$ and $Y$ the not necessarily distinct types of them, respectively. Let the $i^{th}$ and $j^{th}$ power-product of them is denoted by $x^iy^j={\binomh{n}{\ell}}^i\cdot{\binomh{n}{\ell+1}}^j$, and the sum of all such products (where the first term has type $X$, the second does $Y$) of row $n$ by $(x^iy^j)_n$. For example, in case of ${\cal{HPT}}_{\!4,6}$ we find $(a^2b^3)_{3}=3^2\cdot2^3+3^2\cdot1^3=81$ and $(bb^3)_{3}=2\cdot 2^3=16$ (see Figure~\ref{fig:Pascal_46_layer5}).
Clearly, by the vertical symmetry of \hpt4q, $(a^ib^j)_{n}=(b^ja^i)_{n}$ holds. An other easy but important observation is that $(b^{i-1}b)_n=(b^{i-2}b^2)_n=\cdots =(bb^{i-1})_n$, since two consecutive elements having type $B$ are equal. 

\subsection{Case $k=2$}

Before the general description, we restrict ourselves to the sum of squares to facilitate the justification of the method. Hence fix $k=2$, and take 
\begin{equation*}\label{eq:SUMkk}
(s^2)_{n}=\sum_{i=0}^{s_n-1}{\binomhh{n}{i}}^2= (a^2)_{n}+(b^2)_{n}.
\end{equation*}
According to the the structure of the triangle \hpt4q and the expansion of $(x+y)^2$, we will also use the sums
$(ab)_{n}$, $(ba)_{n}$ and $(bb)_{n}$.

First consider the sum $(a^2)_{n+1}$. Observe that an element of type $A$ in row $n+1$ is a sum of two elements either of a type $A$ and $B$ or of a type $B$ from row $n$. Since, apart from the wingers, each element of row $n$ takes part twice in constructing the elements of type $A$ in row $n+1$, then 
\begin{eqnarray*}
(a^2)_{n+1}&=& 2(a^2)_{n}+{2\choose1}(ab)_{n} +{2\choose1}(ba)_{n} +{2\choose1}(bb)_{n}+2(b^2)_{n}-2 \\
&=& 2(a^2)_{n}+2(ab)_{n} +2(ba)_{n} +2(bb)_{n} +2(b^2)_{n}-2.
\end{eqnarray*}

Clearly, an element of type $B$ in row $n+1$ coincides an element of type either $A$ ($(q-4)$-times) or $B$ ($(q-3)$-times, apart from the wingers) from row $n$, thus we see
$$
(b^2)_{n+1}= (q-4)(a^2)_{n}+(q-3)(b^2)_{n}-2(q-4),
$$
where $-2(q-4)$ is again the correction caused by the wingers.

To explain
$$
(ab)_{n+1}= (a^2)_{n}+(ab)_{n} +(ba)_{n} +(bb)_{n}+(b^2)_{n} -1,
$$
we put together the construction rule of elements type $A$ and $B$, respectively, and the fact we need to consider the neighbour pairs $A,B$ (in type). Since the left winger 1 does not appear in $(ab)_{n+1}$, we have the correction $-1$. By the vertical symmetry, 
the relation 
\begin{equation}\label{abba}
(ab)_{n+1}=(ba)_{n+1}
\end{equation}
holds. 

Finally,
$$
(bb)_{n+1}=  (q-5)(a^2)_{n}+(q-4)(b^2)_{n}-2 (q-4)
$$
holds since two neighbours of type $B$ are  coming from an element of type either $A$ or $B$ in direct manner.

As a summary, using (\ref{abba}) we have the system    

\begin{equation}\label{eq:SUM_seq01}
  \begin{array}{ccl}
(a^2)_{n+1}&=& 2(a^2)_{n}+4(ab)_{n} +2(b^2)_{n}+2(bb)_{n}-2,\\
(ab)_{n+1}&=& (a^2)_{n}+2(ab)_{n} +(b^2)_{n}+(bb)_{n} -1,\\
(b^2)_{n+1}&=& (q-4)(a^2)_{n}+(q-3)(b^2)_{n}-2 (q-4),\\
(bb)_{n+1}&=&  (q-5)(a^2)_{n}+(q-4)(b^2)_{n}-2 (q-4).
  \end{array} 
\end{equation}

\subsection{General case}

Recall that $(b^{k-1}b)_n=(b^{k-2}b^2)_n=\cdots =(bb^{k-1})_n$. In the sequel, we will denote them by $u_n$.
We use analoguously the considerations and the explanations from the previous subsection (where the case $k=2$ was handled), and together with the binomial theorem we gain the system of recurrence equations 

\begin{eqnarray}\label{eq:SUMk_seq01}
(a^{k})_{n+1}&=& \sum_{i=0}^{k}{{k}\choose{i}}(a^{k-i}b^{i})_{n} +\sum_{i=0}^{k}{{k}\choose{i}}(b^{k-i}a^{i})_{n}+ \left( \sum_{i=0}^{k}{{k}\choose{i}}-2 \right) u_n -2, \nonumber\\    
 (a^{k-j}b^{j})_{n+1}&=& \sum_{i=0}^{k-j}{{k-j}\choose{i}}(a^{k-j-i}b^{j+i})_{n} + \sum_{i=0}^{k-j}{{k-j}\choose{i}}(b^{k-j-i}a^{j+i})_{n}  \nonumber \\
 & & \qquad + \left( \sum_{i=0}^{k-j}{{k-j}\choose{i}}-1 \right) u_n -1,\\
(b^k)_{n+1} &=& (q-4)(a^k)_{n}+(q-3)(b^k)_{n}-2 (q-4), \nonumber\\  
 u_{n+1} &=& (q-5)(a^k)_{n}+(q-4)(b^k)_{n}-2 (q-4) \nonumber
\end{eqnarray}
with $k+2$ sequences, where $j=1,  \dots, k-1$. To be more cut-clear, we equivalently have
\begin{eqnarray}\label{eq:SUMk_seq02}
(a^{k})_{n+1}&=& 2(a^{k})_{n} + 2\sum_{i=1}^{k-1}{{k}\choose{i}}(a^{k-i}b^{i})_{n}+ 2(b^{k})_{n} +(2^k-2)u_n-2,\nonumber \\    
(a^{k-j}b^{j})_{n+1}&=& (a^{k})_{n}+ \sum_{i=0}^{k-j-1}{{k-j}\choose{i}}(a^{k-j-i}b^{j+i})_{n} 
+ \sum_{i=0}^{k-j-1}{{k-j}\choose{i}}(a^{j+i}b^{k-j-i})_{n} \nonumber \\
&&\qquad\qquad + (b^{k})_{n}+ (2^{k-j}-1) u_n -1,\\
(b^k)_{n+1} &=& (q-4)(a^k)_{n}+(q-3)(b^k)_{n}-2 (q-4), \nonumber \\ 
u_{n+1} &=& (q-5)(a^k)_{n}+(q-4)(b^k)_{n}-2 (q-4). \nonumber
\end{eqnarray} 

Now we claim to eliminate the sequences $(a^{k})_{n}$ and $(b^{k})_{n}$ from (\ref{eq:SUMk_seq02}).

\subsection{Background to eliminate $(a^{k})_{n}$ and $(b^{k})_{n}$ from the system (\ref{eq:SUMk_seq02})}

Our purpose is to eliminate the sequences $(a^{k})_{n}$ and $(b^{k})_{n}$ by giving a recursive formula to describe them.
In the next part, we develope a tool for handling such problems in general. System (\ref{eq:SUMk_seq02}) can be interpreted as a vector recursion of the form 
$$\g_{t+1}=\mathbf{M}\g_t+\bar{\bf{h}}$$ 
given by a suitable matrix $\mathbf{M}$ and vector $\bar{\bf{h}}$, when we consider $(a^{k})_{n},(a^{k-1}b)_{n},\dots,(b^{k})_{n}, u_n$ as coordinate sequences of $\g_{n}$. First we concentrate on the homogenous case, i.e.~when $\bar{\bf{h}}$ is the zero vector  $\bar{\bf{0}}$.

Let $\nu\ge2$ be an integer, further let $\g_0,\dots,\g_{\nu-1}\in\R^\nu$ denote linearly independent initial vectors.
With the coefficients $\alpha_0,\dots,\alpha_{\nu-1}\in\R$ we can set up the homogenous linear vector recursion
\begin{equation} \label{startrec}
\g_{t}=\alpha_{\nu-1}\g_{t-1}+\dots+\alpha_0\g_{t-\nu}\qquad (t\ge\nu).
\end{equation}

\begin{lemma} \label{thM}
There exists uniquely a matrix $\mathbf{M}\in\R^{\nu\times \nu}$ such that
\begin{equation} \label{M}
\g_{t+1}=\mathbf{M}\g_t
\end{equation}
holds for any $t\in\N$.

Moreover, if (\ref{M}) holds for a vector sequence $\g_t$ with a given (not necessarily regular) matrix $\mathbf{M}\in\R^{\nu\times \nu}$, then $\g_t$ satisfies (\ref{startrec}), where the coefficients coincide the negative of the coefficients of the characteristic polynomial of $\mathbf{M}$.

\end{lemma}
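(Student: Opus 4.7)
The plan is to handle the two assertions separately: the first by linear algebra on the prescribed initial data followed by an induction, the second by a direct application of the Cayley--Hamilton theorem.

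For existence and uniqueness of $\mathbf{M}$, I would assemble the initial data into the matrix $G=[\g_0\mid\g_1\mid\cdots\mid\g_{\nu-1}]\in\R^{\nu\times\nu}$, which is invertible because its columns are linearly independent by hypothesis. The $\nu$ conditions $\mathbf{M}\g_t=\g_{t+1}$ for $t=0,1,\dots,\nu-1$ bundle into the single matrix equation $\mathbf{M}G=[\g_1\mid\cdots\mid\g_\nu]$, forcing
$$\mathbf{M}=[\g_1\mid\cdots\mid\g_\nu]\,G^{-1}.$$
Since $\g_0,\dots,\g_{\nu-1}$ form a basis of $\R^\nu$, any matrix satisfying (\ref{M}) for all $t\ge 0$ must agree with this $\mathbf{M}$ on the basis and is therefore unique. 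To show that this $\mathbf{M}$ actually works for every $t\ge\nu$, I would argue by induction on $t$: assuming $\mathbf{M}\g_s=\g_{s+1}$ for all $s<t$, apply $\mathbf{M}$ to the recursion $\g_t=\alpha_{\nu-1}\g_{t-1}+\cdots+\alpha_0\g_{t-\nu}$, invoke linearity together with the inductive hypothesis at the indices $t-\nu,\dots,t-1$, and recognise the result as $\alpha_{\nu-1}\g_t+\cdots+\alpha_0\g_{t-\nu+1}=\g_{t+1}$ via (\ref{startrec}) at index $t+1$.

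For the converse direction, suppose $\g_{t+1}=\mathbf{M}\g_t$ holds with some fixed $\mathbf{M}$. Write the characteristic polynomial as $p(\lambda)=\lambda^\nu+c_{\nu-1}\lambda^{\nu-1}+\cdots+c_0$. By Cayley--Hamilton,
$$\mathbf{M}^\nu=-c_{\nu-1}\mathbf{M}^{\nu-1}-\cdots-c_0 I.$$
Iterating the vector recursion gives $\mathbf{M}^k\g_{t-\nu}=\g_{t-\nu+k}$ for every $k\ge 0$, so applying the identity above to $\g_{t-\nu}$ (for any $t\ge\nu$) yields
$$\g_t=-c_{\nu-1}\g_{t-1}-\cdots-c_0\g_{t-\nu},$$
which is exactly (\ref{startrec}) with $\alpha_i=-c_i$, as claimed. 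Note that no regularity of $\mathbf{M}$ is required, as Cayley--Hamilton imposes no such hypothesis.

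The main subtle point I anticipate is the bookkeeping in the existence argument: one must be careful to distinguish the \emph{a priori} definition of $\mathbf{M}$ from its action on the tail of the sequence, and verify that the linear extension of $\mathbf{M}$ from the basis $\{\g_0,\dots,\g_{\nu-1}\}$ is genuinely compatible with the scalar recurrence for all $t\ge\nu$. Once the induction is set up correctly this amounts to rearranging the recursion and applying linearity, but it is the conceptual bridge between the prescribed action on the initial data and the full vector recursion.
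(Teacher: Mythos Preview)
Your proposal is correct and follows essentially the same approach as the paper: the existence/uniqueness via $\mathbf{M}=[\g_1\mid\cdots\mid\g_\nu]G^{-1}$ plus induction, and the converse via Cayley--Hamilton. The only cosmetic difference is that in the second part the paper applies $k(\mathbf{M})=0$ to $\g_0$ and then propagates by multiplying by $\mathbf{M}$, whereas you apply it directly to $\g_{t-\nu}$; the content is identical.
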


\begin{proof}

Obviously, by (\ref{startrec}) one can obtain $\g_\nu\in\R^\nu$.
Put $\mathbf{G}=[\g_0,\dots,\g_{\nu-1}]\in\R^{\nu\times \nu}$ and  $\mathbf{G}^\star=[\g_1,\dots,\g_{\nu}]\in\R^{\nu\times \nu}$, where the matrix $\mathbf{G}$ is clearly regular.

First we show that the only possibility is $\mathbf{M}=\mathbf{G}^\star \mathbf{G}^{-1}$.
Assume that $\mathbf{M}$ satisfies (\ref{M}) if
$t=0,\dots,\nu-1$. The system of vector equations
\begin{eqnarray*}
\g_1 & = & \mathbf{M}\g_0 \\
     & \vdots &   \\
\g_{\nu} & = & \mathbf{M}\g_{\nu-1} \\
\end{eqnarray*}
is equivalent to the matrix equation
\begin{equation*} \label{Meq}
\mathbf{G}^{\star}=\mathbf{MG}.
\end{equation*}
But $\mathbf{G}$ is regular, therefore $\mathbf{M}=\mathbf{G}^\star \mathbf{G}^{-1}$ really exists.

Now we justify that $\mathbf{M}=\mathbf{G}^\star \mathbf{G}^{-1}$ is suitable for arbitrary subscript $t$, i.e.~(\ref{M}) holds for  any $t\in\N$. The definition of the matrix $\mathbf{M}$ proves the statement for
$t=0,1,\dots,\nu-1$. In order to show for arbitrary $t$, we use the technique of induction. Hence suppose that
(\ref{M}) holds for $t=0,1,\dots,\tau$. Then
\begin{eqnarray*}
\g_{\tau+1}&=&\alpha_{\nu-1}\g_{\tau}+\dots+\alpha_0\g_{\tau-\nu+1}=
\alpha_{\nu-1}\mathbf{G}^\star \mathbf{G}^{-1}\g_{\tau-1}+\dots+\alpha_0\mathbf{G}^\star \mathbf{G}^{-1}\g_{\tau-\nu} \\
&=& \mathbf{G}^\star \mathbf{G}^{-1}(\alpha_{\nu-1}\g_{\tau-1}+\dots+\alpha_0\g_{\tau-\nu})=\mathbf{G}^\star \mathbf{G}^{-1}\g_{\tau}.
\end{eqnarray*}

Finally, we prove that if $\mathbf{M}$ satisfies (\ref{M}), then we arrive at (\ref{startrec}) with the given conditions.
Let $k(x)=x^\nu-\alpha_{\nu-1}x^{\nu-1}-\dots-\alpha_0$ denote the
characteristic polynomial of $\mathbf{M}$. By the Cayley--Hamilton theorem, $k(\mathbf{M})$ is zero a matrix, consequently
$k(\mathbf{M})\g_0=\0$, i.e.
\begin{equation} \label{m1}
\mathbf{M}^\nu\g_0-\alpha_{\nu-1}\mathbf{M}^{\nu-1}\g_0-\dots-\alpha_0\g_0=\0.
\end{equation}
Since for any natural number $t$ the equality $\g_{t+1}=\mathbf{M}\g_t$ implies $\g_{t+1}=\mathbf{M}^{t+1}\g_0$, (\ref{m1})
can be rewritten as
\begin{equation*} \label{m2}
\g_\nu=\alpha_{\nu-1}\g_{\nu-1}+\dots+\alpha_0\g_0.
\end{equation*}
Further 
\begin{equation*} \label{m3}
\g_{\nu+1}=\mathbf{M}\g_\nu=\mathbf{M}(\alpha_{\nu-1}\g_{\nu-1}+\dots+\alpha_0\g_0)=\alpha_{\nu-1}\g_{\nu}+\dots+\alpha_0\g_1,
\end{equation*}
and similarly
$$\g_{t+1}=\mathbf{M}\g_t=\alpha_{\nu-1}\g_{t}+\dots+\alpha_0\g_{t-\nu}\qquad (t>\nu) $$
holds. 
\end{proof}
\begin{remark}
In this lemma, we proved a bit more than we need to investigate the power sums in hyyperbolic Pascal triangles. Indeed, we will use only the direction which provides the coefficients $\alpha_i$ from the characteristic polynomial of $\mathbf{M}$.
A statement of \cite{NL_hyppyr} also proves the second part of Lemma \ref{thM}.
\end{remark}

\subsection{Connection between the homogenous and inhomogenous vector recurrences}

Since system (\ref{eq:SUMk_seq02}) is inhomogenous, and Lemma \ref{thM} is able to handle only homogenous vector recursions, therefore we must clarify the transit between them. 

Assume that the vector $\bar{\bf{h}}$ is fixed, and we have
$$
\g_{t+1}=\mathbf{M}\g_t+\bar{\bf{h}} \qquad (t\in\N),
$$
further suppose that the characteristic polynomial of $\mathbf{M}$ is $k(x)=x^\nu-\alpha_{\nu-1}x^{\nu-1}-\dots-\alpha_0$. 
Obviously, 
$$
\g_{t+2}=\mathbf{M}\g_{t+1}+\bar{\bf{h}},
$$
and 
$$
\g_{t+2}-\g_{t+1}=\mathbf{M}(\g_{t+1}-\g_{t}).
$$
Put $\d_{t}=\g_{t+1}-\g_{t}$. Thus $\d_{t+1}=\mathbf{M}\d_{t}$ implies
$$
\d_{t}=\alpha_{\nu-1}\d_{t-1}+\cdots+\alpha_0\d_{t-\nu},
$$
and then
$$
\g_{t+1}=(\alpha_{\nu-1}+1)\g_{t}+(\alpha_{\nu-2}-\alpha_{\nu-1})\g_{t-1}+\cdots+(\alpha_0-\alpha_1)\g_{t+1-\nu}-\alpha_0\g_{t-\nu}.
$$
Finally, observe that
$$
(x-1)k(x)=x^{\nu+1}-(\alpha_{\nu-1}+1)x^\nu-(\alpha_{\nu-2}-\alpha_{\nu-1})x^{\nu-1}-\cdots-(\alpha_0-\alpha_1)x+\alpha_0,
$$
where $k(x)$ is the characteristic polynomial of $\mathbf{M}$.

\subsection{Determination of the characteristic polynomial}

Consider now the matrix $\mathbf{M}=[m_{i,j}]\in\N^{(k+2)\times(k+2)}$ generated by the system (\ref{eq:SUMk_seq02}). If we introduce binomial coefficients with negative lower index, or when the lower index is greater then the upper index, and take such a value zero (this is one of the conventional approaches), the left upper $k\times (k+1)$ minor matrix can simply be given. Indeed, if $ 0\le i\le k-1$, $0\le j\le k$, by (\ref{eq:SUMk_seq02}), 
$$
m_{i,j}={k-i\choose k-i-j}+{k-i\choose i-j}={k-i\choose j}+{k-i\choose k-j}. 
$$
Further $m_{0,k+1}=2^k-2$, and if  $1\le i\le k-1$ and $j=k+1$, then  $m_{i,j}=2^{k-i}-1$. Moreover, $m_{k,0}=q-4$, $m_{k,k}=q-3$, $m_{k+1,0}=q-5$, $m_{k+1,k}=q-4$. The entries have not been listed up here ($m_{i,j}$ with $k\le i\le k+1$, $1\le j\le k+1,\; j\ne k$) are zero.
Obviously (see (\ref{eq:SUMk_seq02})), in our case the vector
$$
\bar{\bf{h}}=\left[
\begin{array}{ccccccc}
     -2 & -1 & -1 & \cdots & -1 & -2(q-4) & -2(q-4) \\	
\end{array}\right]^\top.
$$

By visualizing the informations we gain
$$
\mathbf{M}=\left[
\begin{array}{ccccc:c}
 \ddots &    &  &  & \Ddots & 2^k-2 \\	
        &    &  &  &        & 2^{k-1}-1 \\	
		  &   & {k-i\choose j}+{k-i\choose k-j}  &  &        & \vdots \\	      
			&    &  &  &        & 2^2-1\\
	\Ddots		&   &  &  &    \ddots    & 2^{1}-1 \\ \hdashline
   q-4     & 0  & \cdots & 0 & q-3  & 0 \\	
		q-5     & 0  & \cdots & 0 & q-4  & 0 \\	
\end{array}\right].
$$

As usual, we will use the formula $k(x)=\det(x\mathbf{I}-\mathbf{M})$ for determining the characteristic polynomial of the matrix $\mathbf{M}$, where $\mathbf{I}$ is the unit matrix. We carry out the evaluation in two steps, finally we obtain a quite simple-looking determinant.

First we deal with $\mathbf{M}$ itself. Let $R_u$ denote the row $u$ of the matrix $\mathbf{M}$.
The following lemma describes row equivalent transformations of $\mathbf{M}$, which keeps the value of the determinant. Before applying the lemma, first we make a preparation step (again a row equivalent transformation) for modifying $\mathbf{M}$, namely $R_{k}^{new}=R_k-R_{k+1}$ to result the penultimate row
$$
\left[
\begin{array}{ccccc:c}
     1 & 0  & \cdots & 0 & 1 & 0 \\	
\end{array}\right].
$$

\begin{lemma}
	For $u=0, 1, \ldots, k-1$ apply successively the row equivalent operation
	\begin{equation}\label{S}
	R_u^{new}=\sum_{t=0}^\delta (-1)^t{\delta\choose t}R_{u+t},
	\end{equation}
where $\delta=k-u\ge1$. Then the elements of the left upper $(k+1)\times(k+1)$ minor of $\mathbf{M}^{new}$ are zero, except the main diagonal, and the antidiagonal elements which are 1. If the main diagonal and antidiagonal meet, the common element is 2.

Further in the last column we have $m_{i,k+1}^{new}=1$ if $1\le i\le k-1$, and $m_{0,k+1}^{new}=0$.  
\end{lemma}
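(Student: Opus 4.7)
The plan is to reduce the computation to a single finite-difference identity by first observing that the preparation step $R_k^{\mathrm{new}} = R_k - R_{k+1}$ is chosen precisely so that row $k$ of the left upper $(k+1)\times(k+1)$ minor becomes $[j=0]+[j=k]$, which equals $\binom{0}{j}+\binom{0}{k-j}$. Hence after the preparation, the unified formula $m_{i,j}=\binom{k-i}{j}+\binom{k-i}{k-j}$ holds for \emph{all} $0\le i\le k$ and $0\le j\le k$, whereas it was originally stated only for $0\le i\le k-1$. This unification is what makes the subsequent row operations collapse cleanly.

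The key tool is the identity
\[
\sum_{t=0}^{\delta}(-1)^t \binom{\delta}{t}\binom{\delta-t}{j} = [\delta=j],
\]
valid for nonnegative integers $\delta$ and $j$. I would prove it by reindexing $s=\delta-t$ and using the absorption identity $\binom{\delta}{s}\binom{s}{j}=\binom{\delta}{j}\binom{\delta-j}{s-j}$, followed by the standard $\sum_r (-1)^r\binom{\delta-j}{r}=[\delta=j]$. Since the operations (\ref{S}) are applied in increasing order of $u$ and each $R_u^{\mathrm{new}}$ uses only rows $R_{u+t}$ with $t\ge 0$, no row is consulted after having been rewritten, so the operations can be analyzed using the original (prep-modified) entries.

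Substituting the unified formula into (\ref{S}) with $\delta=k-u$, for every column $0\le j\le k$,
\[
m_{u,j}^{\mathrm{new}} = \sum_{t=0}^{\delta}(-1)^t\binom{\delta}{t}\!\left[\binom{\delta-t}{j}+\binom{\delta-t}{k-j}\right] = [j=\delta]+[j=k-\delta] = [j=u]+[j=k-u],
\]
which produces the claimed $1$'s on the main diagonal (entries at $(u,u)$) and on the antidiagonal (entries at $(u,k-u)$), with value $2$ at their common entry when $k=2u$, and $0$ elsewhere in the minor.

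For the last column the unified formula fails and a separate direct computation is required. Using $m_{k,k+1}=0$ (unchanged by the preparation), $m_{i,k+1}=2^{k-i}-1$ for $1\le i\le k-1$, and $m_{0,k+1}=2^k-2$, together with the elementary identities $\sum_{t=0}^{\delta}(-1)^t\binom{\delta}{t}2^{\delta-t}=1$ (binomial theorem for $(2-1)^{\delta}$) and $\sum_{t=0}^{\delta}(-1)^t\binom{\delta}{t}=[\delta=0]$, one splits into two cases: for $1\le u\le k-1$ all indices $u+t$ lie in $\{1,\dots,k-1\}$ (with the $t=\delta$ term contributing $0$), giving $m_{u,k+1}^{\mathrm{new}}=1$ after a one-line cancellation; for $u=0$ the anomalous entry $2^k-2$ in place of the ``expected'' $2^k-1$ contributes exactly the correction $-1$ required to cancel the residual $1$, giving $m_{0,k+1}^{\mathrm{new}}=0$. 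The main obstacle I expect is careful sign-tracking at the boundary terms $t=0$ and $t=\delta$, especially confirming that the prep-modified row $R_k$ is consistent with the unified formula on the first $k+1$ columns while contributing $0$ to the last column, so that all three claims of the lemma flow from a single identity plus the binomial theorem.
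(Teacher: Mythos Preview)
Your proposal is correct and follows essentially the same route as the paper: both compute $m_{u,j}^{\mathrm{new}}$ by applying the finite-difference identity to the two binomial summands in $m_{u+t,j}$, and both handle the last column with the binomial theorem $(2-1)^\delta=1$ together with $\sum_t(-1)^t\binom{\delta}{t}=0$. The only differences are cosmetic: the paper cites the Gould identity $\sum_{t=0}^{\delta}(-1)^t\binom{\delta}{t}\binom{z-t}{r}=\binom{z-\delta}{r-\delta}$ as a black box, whereas you specialize to $z=\delta$ and supply a short self-contained proof via trinomial revision; and you make explicit two points the paper leaves implicit, namely that the preparation step forces row $k$ to obey the same formula $\binom{0}{j}+\binom{0}{k-j}$ so the sum over $t$ is well-defined up to $t=\delta$, and that the ordering of the operations guarantees each $R_u^{\mathrm{new}}$ reads only untouched rows.
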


\begin{proof}
Let $j$ be arbitrarily fixed. Then
\begin{eqnarray*}
m_{u,j}^{new}&=&\sum_{t=0}^\delta (-1)^t{\delta\choose t}\left({k-u-t\choose j}+{k-u-t\choose k-j}\right)\\
&=& \sum_{t=0}^\delta (-1)^t{\delta\choose t}{k-u-t\choose j}+\sum_{t=0}^\delta (-1)^t{\delta\choose t}{k-u-t\choose k-j} \\
&=& {k-u-\delta\choose j-\delta}+{k-u-\delta\choose k-j-\delta},
\end{eqnarray*}
where the last equality is a consequence of the more general identity 
$$\sum_{t=0}^\delta (-1)^t{\delta\choose t}{z-t\choose r}={z-\delta\choose r-\delta}$$ 
(see, for instance, page 28 in \cite{G}).
Clearly, $\delta=k-u$ implies
$$
{k-u-\delta\choose j-\delta}+{k-u-\delta\choose k-j-\delta}=
\left\{
\begin{array}{ll}
     0, & \rm{if}\; {\it j\ne k-u}\; {\rm and}\; {\it j\ne u};\\
     1, & \rm{if}\; either\; {\it j=k-u}\; or\; {\it j= u}; \\
		 2, & \rm{if}\; {\it j=k-u}\; and\; {\it j= u}.\\
\end{array}\right.
$$
Note that the last case can be occurred only if $k=2u$.

Continuing with the last column, for $1\le u<k$ we easily conclude
\begin{eqnarray*}
m_{u,k+1}^{new}&=&\sum_{t=0}^\delta (-1)^t{\delta\choose t}\left(2^{k-u-t}-1\right)
   = \sum_{t=0}^\delta (-1)^t{\delta\choose t}2^{k-u-t}-\sum_{t=0}^\delta (-1)^t{\delta\choose t} \\
&=& 2^{k-u}\sum_{t=0}^\delta (-1)^t{\delta\choose t}\left(\frac{1}{2}\right)^t-0
   = 2^{k-u}\cdot \left(1-\frac{1}{2}\right)^\delta=1.
\end{eqnarray*}
Clearly, the same argument leads to 0 in case of the last element of the $0$th row.
\end{proof}

Now let $\mathbf{\tilde M}=\mathbf{M}-x\mathbf{I}$, and we will carry out the same row equivalent operations on $\mathbf{\tilde M}$ what we did on $\mathbf{M}$. Only one modification is, that after the preparation step we insert an additional operation. Namely, $R_{k}^{new}=R_k-R_{k+1}$ is followed by
$R_{k+1}^{new}=R_{k+1}-(q-5)R_{k}$. Hence keeping tabs on the last two rows of $\mathbf{\tilde M}$, we see
$$
\left[
\begin{array}{ccccc:c}
     q-4 & 0 & \cdots & 0 & (q-3)-x & 0 \\
		 q-5 & 0 & \cdots & 0 & q-4 & -x\\
\end{array}
\right]\Longrightarrow
\left[
\begin{array}{ccccc:c}
     1 & 0 & \cdots & 0 & 1-x & x \\
		 0 & 0 & \cdots & 0 & 1+(q-5)x & -(q-4)x\\
\end{array}
\right].
$$

Since $\mathbf{\tilde M}=\mathbf{M}-x\mathbf{I}$ effects the elements only in the diagonal of $\mathbf{M}$ (by $-x$), according to 
(\ref{S}),  for $0\le u<j\le k$ we find
$$
\tilde m_{u,j}^{new}=(-1)^{j-u+1}{\delta\choose j-u}x.
$$
Finally, the last column of $\mathbf{\tilde M}^{new}$ is given by the vector
$$
\left[
\begin{array}{ccccccc}
     (-1)^kx \;&\; 1+(-1)^{k-1}x \;&\; \cdots \;&\; 1+x \;&\; 1-x \;&\; x \;&\; -(q-4)x \\	
\end{array}\right]^\top.
$$
Hence, we conclude that $\mathbf{\tilde M}^{new}$ is the sum of
{\renewcommand{\arraystretch}{1.5}
$$
\left[
\begin{array}{rrrrrr:r}
     -x & {k\choose 1}x & -{k\choose 2}x & \quad\cdots\quad & (-1)^k{k\choose k-1}x & (-1)^{k+1}{k\choose k}x & (-1)^kx \\
		0 & -x & {k-1\choose 1}x & \quad\cdots\quad & (-1)^{k-1}{k-1\choose k-2}x & (-1)^{k}{k-1\choose k-1}x & (-1)^{k-1}x\\
		0 & 0 & -x & \quad\cdots\quad & (-1)^{k-2}{k-2\choose k-3}x & (-1)^{k-1}{k-2\choose k-2}x & (-1)^{k-2}x \\
		\vdots & \vdots & \vdots & \qquad\ddots\quad & \vdots & \vdots & \vdots \\
		0 & 0 & 0 & \quad\cdots\quad & -x & {1\choose 1}x & -x \\ \hdashline
		0 & 0 & 0 & \quad\cdots\quad & 0 & -x & x \\ 
		0 & 0 & 0 & \quad\cdots\quad & 0 & (q-5)x & -(q-4)x 
\end{array}\right]
$$
and
$$
\left[
\begin{array}{cccccc:c}
    1 & 0 & 0 & \quad\cdots\quad & 0 & 1 & 0 \\
		0 & 1 & 0 & \quad\cdots\quad & 1 & 0 & 1 \\
		0 & 0 & 1 & \quad\cdots\quad & 0 & 0 & 1 \\
		\vdots & \vdots & \vdots & \quad\ddots\quad & \vdots & \vdots & \vdots \\
		0 & 1 & 0 & \quad\cdots\quad & 1 & 0 & 1 \\ \hdashline
		1 & 0 & 0 & \quad\cdots\quad & 0 & 1 & 0 \\
		0 & 0 & 0 & \quad\cdots\quad & 0 & 1 & 0 
\end{array}\right].
$$
}
At the end, the characteristic polynomial $k(x)=(-1)^k\det(\mathbf{\tilde M}^{new})$, and by the previous subsection we can return the inhomogenous case  with the polynomial
$$
(x-1)k(x)=(x-1)(-1)^k\det(\mathbf{\tilde M}^{new}).
$$
In the range $1\le k\le 11$ the appendix will provide the results, since knowing the characteristic polynomial, one can obtain directly the corresponding recursive formula.

\subsection{Example: return to the case $k=2$}

System (\ref{eq:SUM_seq01}) provides the matrix $\mathbf{M}$, and then 
$$
\mathbf{\tilde M}=\mathbf{M}-x\mathbf{I}=\left[
\begin{array}{ccc:c}
    2-x & 4 & 2 & 2 \\
		1 & 2-x & 1 & 1\\ \hdashline
		q-4 & 0 & q-3-x & 0 \\ 
		q-5 & 0 & q-4 & -x 
\end{array}\right].
$$
The consecutive row equivalent transformations $R_2=R_2-R_3$, $R_3-(q-5)R_2$, and then $R_0=R_0-{2\choose1}R_1+{2\choose2}R_2$, $R_1=R_1-{1\choose1}R_2$ return with
 
$$
\mathbf{\tilde M}^{new}=\left[
\begin{array}{ccc:c}
    -x & 2x & -x & x\\
		0 & -x & x & -x\\ \hdashline
		0 & 0 & -x & x \\ 
		0 & 0 & (q-5)x & -(q-4)x 
\end{array}\right]+\left[
\begin{array}{ccc:c}
    1 & 0 & 1 & 0\\
		0 & 2 & 0 & 1\\ \hdashline
		1 & 0 & 1 & 0 \\ 
		0 & 0 & 1 & 0 
\end{array}\right]
$$
Thus the determinant
$$
k_1(x)=\det(\mathbf{\tilde M}^{new})=x^4-(q+1)x^3+6x^2-2x,
$$
and then we obtain 
$$
k(x)=(x-1)(-1)^2k_1(x)=x^5-(q+2)x^4+(q+7)x^3-8x^2+2x,
$$
which provide the recursive rule 
$$
(s^2)_{n}=(q+2)(s^2)_{n-1}-(q+7)(s^2)_{n-2}+8(s^2)_{n-3}-2(s^2)_{n-4}.
$$
The initial values can be given from the first 4 rows of ${\cal HPT}_{\!4,q}$:
$(s^2)_{1}=2$, $(s^2)_{2}=6$, $(s^2)_{3}=4q+4$, $(s^3)_{4}=4q^2+6q-20$.

\section{Appendix and conjectures}

The method works for arbitrary positive integer $k$. In the next table we collect the coefficients of
$$
(s^k)_{n}=\sum_{j=1}c_j(q)(s^k)_{n-j}.
$$

{\footnotesize  
{\renewcommand{\arraystretch}{1.4}
\begin{center} \label{table1}
\begin{tabular}{||c||c|c|c|c|c||}
  \hline\hline

  $k$ & $c_1(q)$ & $c_2(q)$ & $c_3(q)$ & $c_4(q)$ & $c_5(q)$   \\  \hline\hline
	
	0 & $q-1$ & $-q+1$ & $1$ &  &    \\  \hline
	
	1 & $q$ & $-q-1$ & $2$ &  &     \\  \hline
	
	2 & $q+2$ & $-q-7$ & $8$ & $-2$ &    \\  \hline
	
	3 & $q+4$ & $q-19$ & $-2q+18$ & $-2$ &   \\  \hline
	
	4 & $q+7$ & $6q-41$ & $-7q+31$ & $6$ & $-2$   \\  \hline
	
	5 & $q+11$ & $18q-71$ & $-9q-17$ & $-10q+88$ & $-10$ \\  \hline
	
	6 & $q+17$ & $44q-99$ & $17q-303$ & $-62q+404$ & $-14$  \\  \hline
	
	7 & $q+26$ & $99q-68$ & $177q-1400$ & $-235q+1183$ & $-42q+302$  \\  \hline
	
	8 & $q+40$ & $213q+206$ & $757q-4842$ & $-717q+2981$ & $-254q+1782$   \\  \hline
	
	9 & $q+62$ & $447q+1288$ & $2433q-15116$ & $-2431q+10555$ & $-450q+3662$   \\  \hline
	
	10 & $q+97$ & $924q+4782$ & $6096q-48560$ & $-13946q+75623$ & $5903q-24351$ \\  \hline
	
	11 & $q+153$ & $1892q+15110$ & $9924q-181164$   & $-99514q+592503$ & $78523q-360899$  \\  \hline\hline	
  
\end{tabular}
\end{center}
}
}

{\footnotesize  
{\renewcommand{\arraystretch}{1.4}
\begin{center} \label{table2}
\begin{tabular}{||c||c|c|c||}
  \hline\hline
  $k$ &  $c_6(q)$&  $c_7(q)$ & $c_8(q)$  \\  \hline\hline
	6 & $-4$ &  & \\  \hline
	7 & $-42$  &  & \\  \hline
	8 & $-162$ & $-4$ & \\  \hline
	9 & $-450$  & 0 & \\  \hline
	10 & $1022q-8408$ & $814$ & $4$\\  \hline
	11 & $9174q-74876$ & $9174$& 0 \\  \hline\hline	
\end{tabular}
\end{center}
}
}
\medskip

Partially from the results we have the following
\begin{conj}
Let $k$ be given. Then 
\begin{itemize}
	\item the linear recurrence corresponding to $k$ has order $\left\lfloor k/2\right\rfloor+3$.
	\item Further the coefficients $c_j(q)$ are linear polynomials in $q$.
\end{itemize}
\end{conj}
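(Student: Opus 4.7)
Both parts follow from the explicit decomposition $\mathbf{\tilde M}^{new}=A(x)+B$ exhibited in the preceding subsection — $A(x)$ being the matrix whose nonzero entries are all multiples of $x$ (the first summand in the displayed formula), and $B$ being the integer matrix (the second summand).

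\emph{Linearity in $q$.} The parameter $q$ occurs in $\mathbf{\tilde M}^{new}$ only in row $k+1$, whose two nonzero entries $1+(q-5)x$ and $-(q-4)x$ are affine in $q$. Since every summand in the Leibniz expansion of $\det(\mathbf{\tilde M}^{new})$ picks exactly one factor from row $k+1$, the determinant is affine in $q$. The same therefore holds for $k(x)=(-1)^k\det(\mathbf{\tilde M}^{new})$ and for $(x-1)k(x)$; reading off the coefficient of each power of $x$ in the latter polynomial displays each $c_j(q)$ as a polynomial of degree at most $1$ in $q$.

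\emph{Order $\lfloor k/2\rfloor+3$.} The integer matrix $B$ has the row symmetry $i\leftrightarrow k-i$: for $0\le i\le k$, rows $i$ and $k-i$ of $B$ coincide, each carrying $1$'s at the positions $\{i,k-i\}$ and (when $1\le i\le k-1$) also at position $k+1$. This extends to the boundary pair $(0,k)$ because the preparatory step $R_k^{new}=R_k-R_{k+1}$ was designed precisely so that the $B$-part of row $k$ equals that of row $0$. Hence for each of the $\lceil k/2\rceil$ index pairs with $0\le i<k-i$, the row operation $R_{k-i}\leftarrow R_{k-i}-R_i$ annihilates the $B$-part of row $k-i$ and leaves a row lying inside $A(x)$, whose entries are all multiples of $x$. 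Factoring $x$ out of each of these $\lceil k/2\rceil$ rows gives $x^{\lceil k/2\rceil}\mid\det(\mathbf{\tilde M}^{new})$, hence $x^{\lceil k/2\rceil}\mid (x-1)k(x)$. The quotient $(x-1)k(x)/x^{\lceil k/2\rceil}$ has degree $k+3-\lceil k/2\rceil=\lfloor k/2\rfloor+3$, which is the recurrence polynomial of the stated order.

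\emph{Main obstacle.} The delicate remaining step is to verify that no further uniform cancellation occurs, so that the stated order is genuinely attained (and not over-counted). This amounts to showing that the minimal polynomial of $\mathbf{M}$ on the cyclic subspace generated by the initial vector $\g_0$ of sums has degree exactly $\lfloor k/2\rfloor+2$. A practical route would be to exhibit a simple nonzero root of the quotient (for instance one matching the dominant growth rate of $s_n$) and check that it contributes a nonzero coefficient in the Binet-type expression for $(s^k)_n$; for any fixed $k$ this is a finite computation, but a $k$-uniform argument will likely require asymptotic control of $(s^k)_n$. The trailing zero coefficient visible for odd $k$ in the table (as at $k=9,11$) suggests that the \emph{minimal} order is in fact $\lfloor k/2\rfloor+2$ in the odd case, so the conjectured value should be read as the order of the naturally-obtained recurrence.
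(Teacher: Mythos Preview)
The statement you are addressing is a \emph{conjecture} in the paper, not a theorem: the authors do not prove it. Their supporting evidence consists of the numerical table for $k\le11$ and the observation that the symmetrization $(c_j)_n=(a^{k-j}b^j)_n+(a^jb^{k-j})_n$ reduces the system to $\lfloor k/2\rfloor+3$ coupled sequences. That reduction yields a characteristic polynomial of degree $\lfloor k/2\rfloor+3$, hence (after the $(x-1)$ factor) a recurrence of order at most $\lfloor k/2\rfloor+4$; the paper does not close the remaining gap of one.

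Your linearity argument is correct and complete: after the row operations described in the paper the parameter $q$ survives only in row $k+1$ of $\mathbf{\tilde M}^{new}$, linearly, and Leibniz expansion along that row gives affine dependence on $q$ for every coefficient of $(x-1)k(x)$. This actually \emph{proves} the second bullet of the conjecture, which the paper does not do.

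Your order argument is also correct and in fact sharper than the paper's. The symmetry $B_{i,\cdot}=B_{k-i,\cdot}$ for $0\le i\le k$ (the boundary case $i=0$ holding precisely because of the preparatory step $R_k\leftarrow R_k-R_{k+1}$) lets you subtract rows in pairs and factor out $x^{\lceil k/2\rceil}$ from $\det(\mathbf{\tilde M}^{new})$. This gives order at most $\lfloor k/2\rfloor+3$ directly, one better than what the paper's symmetrized system yields. Conceptually your row-pair subtraction is the matrix-level shadow of the same $(j\leftrightarrow k-j)$ symmetry the paper exploits, but you also capture the extra degeneracy coming from the $(0,k)$ pair, which the paper's variable-level reduction misses because $(a^k)_n$ and $(b^k)_n$ are kept separate.

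You correctly flag the genuine remaining obstacle: nothing so far rules out a further factor of $x$. Your reading of the table is apt, and indeed the entries $c_7(q)=0$ at $k=9$ and $c_8(q)=0$ at $k=11$ show that, if ``order'' means the minimal order, the conjecture as stated is \emph{false} for those $k$; the intended reading is presumably the order of the recurrence produced by the method, in which case your divisibility argument settles the upper bound but the matching lower bound (that $x^{\lceil k/2\rceil+1}\nmid k(x)$ for even $k$) remains open.
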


Our conjecture is based not only on the first few cases but also on the following approach. We are able to decrease the size of \eqref{eq:SUMk_seq01} which entails the reduction of the dimension of matrix $\mathbf{M}$ (but we loose its simplicity). The new system of $\left[ k/2\right]+3$ recurrent sequences is the following.

Put $\ell=\lfloor(k-1)/2\rfloor$ and $m=\lceil(k-1)/2\rceil$. Obviously, if $k$ is odd, then $m=\ell$, otherwise $m=\ell+1$. Let $(c_j)_n=(a^{k-j}b^{j})_{n}+(a^{j}b^{k-j})_{n}$, where $j=1,\ldots, \ell$. Moreover, if $k$ is even, then let $(c_{\ell+1})_n=(a^{\ell+1}b^{\ell+1})_{n}$. Now the new system of recurrences admits
\begin{eqnarray*}\label{eq:SUMk_seq05}
(a^{k})_{n+1}&=& 2(a^{k})_{n} +      2\sum_{i=1}^{m}{{k}\choose{i}}(c_i)_{n} + 2(b^{k})_{n}+(2^k-2)u_n-2,\nonumber\\
(b^k)_{n+1} &=& (q-4)(a^k)_{n}+(q-3)(b^k)_{n}-2 (q-4),\nonumber\\      
(c_{j})_{n+1}&=& (a^{k})_{n}+ \sum_{i=j}^{m}{{k-j}\choose{i-j}}(c_{i})_{n}  
  +   \sum_{i=1}^{m} \left(  {{k-j}\choose{i}}+ {{j}\choose{i}} \right)
(c_{i})_{n} \\ 
& & \qquad+ (b^{k})_{n}
+ (2^{k-j}-1) u_n -1,\nonumber\\
u_{n+1} &=& (q-5)(a^k)_{n}+(q-4)(b^k)_{n}-2 (q-4).\nonumber
\end{eqnarray*} 
\noindent where $j=1,\dots, \ell$ . When $k$ is even, thenthere is an addition sequence
\begin{eqnarray*}\label{eq:SUMk_add}
(c_{\ell+1})_{n+1}&=& (a^{k})_{n}+ (c_{\ell+1})_{n}  
  + \sum_{i=1}^{\ell+1}{{k-\ell-1}\choose{i}}(c_{i})_{n}\nonumber \\
& &\qquad+ (b^{k})_{n}+ (2^{k-\ell-1}-1) u_n -1.
\end{eqnarray*}

\end{document}